\newcommand{\C}{\mathbb{C}} \newcommand{\PP}{\mathbb{P}}
\newcommand{\Ccl}{\mathbb{C}\ell}
\newcommand{\mb}{\mathbb}
\newcommand{\mc}{\mathcal}
\newcommand{\R}{\mathbb{R}} \newcommand{\Z}{\mathbb{Z}}
\newcommand{\rk}{\operatorname{rk}}
\newcommand{\Sc}{\mb{S}_{\mb{C}}}
\newcommand{\Sp}{\mbox{Spin}}
\newcommand{\Spc}{\mbox{Spin}^{\mb{C}}}
\newcommand{\tr}{\mbox{tr}}
\newcommand{\vol}{\operatorname{vol}}
\newtheorem{teo}{Theorem}
\newtheorem{lem}[teo]{Lemma}
\newtheorem{prop}[teo]{Proposition}
\newtheorem*{cor}{Corollary}
\newtheorem*{mthm}{Main Theorem}
\begin{document}

\title{On the spectrum of the twisted Dolbeault Laplacian over K\"ahler manifolds}
\author{Marcos Jardim \\ IMECC - UNICAMP \\
Departamento de Matem\'atica \\ Caixa Postal 6065 \\
13083-970 Campinas-SP, Brazil \\ and \\ Rafael F. Le\~ao \\ Departamento de Matem\'atica \\
Universidade Federal do Paran\'a \\ Caixa Postal 019081 \\ 81531-990 Curitiba-PR, Brazil}

\maketitle

\begin{abstract}
We use Dirac operator techniques to a establish sharp lower bound for the first eigenvalue of the Dolbeault Laplacian twisted by Hermitian-Einstein connections on a vector bundle of negative degree over compact K\"ahler manifolds.

\vskip20pt\noindent{\bf 2000 MSC:} 58C40; 32L07; 58J50\newline
\noindent{\bf Keywords:} Twisted Dolbeault Laplacian; Hermitian-Einstein connections; holomorphic vector bundles.
\end{abstract}


\section{Introduction}

Many results on the spectrum of the Hodge Laplacian on differential forms (see \cite{Bo} and the references therein),
and on the spectrum of the Dirac operator (see for instance \cite{A} and the book \cite{F}) can be found in the literature. In particular, the relation between the eigenvalues of the Dirac operator and those of the Dolbeault Laplacian on a K\"ahler manifold is considered in \cite{Ki1,Ki2}.

However, there are very few results available for the case of twisted operators, i.e. when some additional connection $\nabla_A$ on a vector bundle $E$ is considered. Recently, some results concerning twisted Laplacian and Dirac operators on Riemann surfaces of constant curvature appeared in \cite{AP,Pr1}. In this article, we give a sharp estimate for the first eigenvalue of the twisted Dolbeault Laplacian on K\"ahler manifolds and for the first nonzero eigenvalue of the twisted complex Dirac operator on Riemann surfaces. 

Let $M$ be a compact K\"ahler manifold of complex dimension $n$ and let $E$ be a holomorphic Hermitian vector bundle. We can endow $E$ with a connection $\nabla_A$ compatible both with the Hermitian and holomorphic structures; this is the so-called Chern connection on $E$. One then has the decomposition:
\begin{equation}
  \nabla_A = \partial_A + \bar{\partial}_A ~~.
\end{equation}

If we denote the space of sections $\Gamma(E \otimes \wedge^{p,q}M)$ by $\Omega^{p,q}(E)$, then $\partial_A$ and $\bar{\partial}_A$ are first order differential operators acting as follows ($p,q=0,\dots,n$)
\begin{equation}
  \begin{split}
    \partial_A : \Omega^{p,q}(E) \rightarrow \Omega^{p+1,q}(E) \\
    \bar{\partial}_A : \Omega^{p,q}(E) \rightarrow \Omega^{p,q+1}(E)
  \end{split}
\end{equation}
Using the Hermitian structure of $E$ and the metric of $M$ we can define their formal adjoints
\begin{equation}
  \begin{split}
    \partial_A^* : \Omega^{p,q}(E) \rightarrow \Omega^{p-1,q}(E) \\
    \bar{\partial}_A^* : \Omega^{p,q}(E) \rightarrow \Omega^{p,q-1}(E)~.
  \end{split}
\end{equation}
These operators define a natural, second order differential operator on \linebreak $\Omega^{\bullet}(E) = \oplus_{p,q} \Omega^{p,q}(E)$, the so called Dolbeault Laplacian:
\begin{equation}
  \Delta_{\bar{\partial}} = \bar{\partial}_A \bar{\partial}_A^* + \bar{\partial}_A^* \bar{\partial}_A ~.
\end{equation}
Restricted to $\Omega^{0}(E)$, the Dolbeault Laplacian simplifies to $\bar{\partial}_A^* \bar{\partial}_A$. This is the operator we shall concentrate on. Its kernel, which consists of the holomorphic sections of $E$, is known to vanish under some circumstances, and then it makes sense to ask for lower bounds for its spectrum.

One class of holomorphic vector bundles for which $\ker \bar{\partial}_A^* \bar{\partial}_A=H^0(E)=0$ are stable holomorphic bundles of negative degree. Recall that the degree of a complex vector bundle over a K\"ahler manifold is defined as follows:
$$ \deg(E) = \int_M c_1(E)\wedge \omega^{n-1} ~~, $$
where $\omega$ is the K\"ahler form and $n$ is the complex dimension of $M$. Also, a holomorphic bundle is stable if and only if it admits a Hermitian-Einstein connection, i.e. a compatible connection $\nabla_A$ whose curvature $F_A$ satisfies $i\Lambda F_A = c \mb{I}_E$ where $c$ is a topological constant equal to
$$\frac{2 \pi \deg{E}}{(n-1)! \rk(E) \vol(M)} ~,$$
and $\mb{I}_E$ is the identity endomorphism of the bundle $E$. Here, $\Lambda$ denotes contraction by the K\"ahler form, as usual.

Using the K\"ahler identities, one can easily establish the following \linebreak Weitzenb\"ock type formula \cite[Lemma 6.1.7]{DK}:
\begin{equation} \label{eq_kah_ide}
  \bar{\partial}_A^* \bar{\partial}_A = \frac{1}{2} \nabla_A^* \nabla_A - \frac{i}{2} \Lambda F_A~~,
\end{equation}
Thus if $s \in \Gamma(E)$ is a section such that $\bar{\partial}_A^* \bar{\partial}_A s = \lambda s$ and $\nabla_A$ is a Hermitian-Einstein connection, we can use equation (\ref{eq_kah_ide}) to obtain the inequality:
\begin{equation}
\lambda \geq \frac{- \pi \deg(E)}{(n-1)! \rk(E) \vol(M)}
\end{equation}
This estimate makes sense for stable bundles of negative degree over arbitrary K\"ahler manifolds. However, such estimate is never sharp. Indeed, notice that equality holds precisely when $\nabla_A \psi = 0$ hence $\bar{\partial}_A \psi = 0$, so $\bar{\partial}_A^* \bar{\partial}_A \psi = 0$, leading to a contradiction, since $E$
had no holomorphic sections.

This fact suggests that a better estimate for the eigenvalues of the twisted Dolbeault Laplacian $\bar{\partial}_A^* \bar{\partial}_A$ on sections of a bundle of negative degree admitting an Hermitian-Einstein connection can be obtained. 

We consider the canonical $\Spc$ structure associated to complex manifolds and the associated twisted complex Dirac operator. This operator can be viewed as a square root for the Dolbeault Laplacian, and we show in Section \ref{w} how the Weitzenb\"ock formula relating these two operators generalizes the formula (\ref{eq_kah_ide}) obtained from the K\"ahler identities. This generalized expression is subsequently used in Section \ref{p} to prove our main result:

\begin{mthm} \label{teo_dol}
Let $M$ be a compact K\"ahler manifold of complex dimension $n$, and let $E$ be a holomorphic Hermitian vector bundle of negative degree with a compatible connection $\nabla_A$. Suppose that $\nabla_A$ satisfies the Hermitian-Einstein condition. If $\lambda$ is an eigenvalue for the operator $\bar{\partial}_A^* \bar{\partial}_A: \Omega^0(E) \to \Omega^0(E)$, then
\begin{equation}\label{lb}
  \lambda \geq -\frac{2n}{2n-1}\frac{\pi \deg(E)}{(n-1)! \rk(E) \vol(M)}~.
\end{equation}
\end{mthm}

In Section \ref{s} we will present examples in which the lower bound (\ref{lb}) is actually attained.

In \cite{A}, Atiyah pointed out that absolute upper bounds for the first eigenvalue of second order differential operators do not exist in general, by claiming that the first eigenvalue of the twisted trace Laplacian on a line bundle $L$ over a Riemann surface is proportional to the first Chern class of $L$. The lower bound of our Main Theorem makes Atiyah's claim more precise for the case of the Dolbeault Laplacian, and generalizes it to bundles of higher rank over K\"ahler manifolds of arbitrary dimension.

For Riemann surfaces (case $n=1$), our Theorem has an important consequence for the spectrum of the twisted complex and real Dirac operators.

\begin{cor}
Let $M$ be a Riemann surface, and let $E$ be a holomorphic Hermitian vector bundle of negative degree with a compatible connection $\nabla_A$ satisfying the Hermitian-Einstein condition. Then the nonzero eigenvalues $\mu$ of the twisted complex Dirac operator $D_A$ satisfy
\begin{equation}\label{lb-d}
  \mu \geq \sqrt{- \frac{4 \pi \deg(E)}{\rk(E) \vol(M)}}~.
\end{equation}
\end{cor}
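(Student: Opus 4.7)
The plan is to reduce the corollary to the Main Theorem by exploiting the square-root relationship between the twisted complex Dirac operator $D_A$ and the twisted Dolbeault Laplacian on the canonical $\Spc$ structure. On a Riemann surface, the spinor bundle of this structure is $\wedge^{0,0}M \oplus \wedge^{0,1}M$, so twisted spinors live in $\Omega^0(E) \oplus \Omega^{0,1}(E)$, and one has the identification
\begin{equation}
D_A \;=\; \sqrt{2}\,\bigl(\bar{\partial}_A + \bar{\partial}_A^*\bigr),
\end{equation}
to be justified via the Weitzenb\"ock/square formula developed in Section \ref{w}. In particular, $D_A$ is odd with respect to the natural $\Z/2$-grading on spinors, and $D_A^2 = 2\Delta_{\bar\partial}$.

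Next I would pick a nonzero eigenvalue $\mu$ of $D_A$ and decompose a corresponding eigenspinor as $\psi = \psi_0 + \psi_1$ with $\psi_0 \in \Omega^0(E)$ and $\psi_1 \in \Omega^{0,1}(E)$. The eigenvalue equation $D_A \psi = \mu\psi$ splits, by bidegree, into the two equations
\begin{equation}
\sqrt{2}\,\bar{\partial}_A^*\psi_1 = \mu\,\psi_0, \qquad \sqrt{2}\,\bar{\partial}_A\psi_0 = \mu\,\psi_1.
\end{equation}
If $\psi_0$ were zero, the second equation would force $\psi_1=0$ (since $\mu\neq 0$), contradicting $\psi\neq 0$. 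Hence $\psi_0 \neq 0$, and substituting the second equation into the first gives
\begin{equation}
\bar{\partial}_A^*\bar{\partial}_A\,\psi_0 \;=\; \frac{\mu^2}{2}\,\psi_0,
\end{equation}
so $\mu^2/2$ is an eigenvalue of $\bar{\partial}_A^*\bar{\partial}_A$ on $\Omega^0(E)$.

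Finally, I would invoke the Main Theorem with $n=1$: the eigenvalue $\mu^2/2$ of the twisted Dolbeault Laplacian satisfies
\begin{equation}
\frac{\mu^2}{2} \;\geq\; -\frac{2}{1}\cdot\frac{\pi\deg(E)}{\rk(E)\vol(M)}
\;=\; -\frac{2\pi\deg(E)}{\rk(E)\vol(M)},
\end{equation}
and rearranging and taking square roots yields exactly the bound (\ref{lb-d}) (the spectrum of $D_A$ is symmetric about the origin because $D_A$ anticommutes with the grading involution, so the bound is really on $|\mu|$).

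The only genuinely delicate step is the first one, namely pinning down the precise identification $D_A = \sqrt{2}(\bar\partial_A + \bar\partial_A^*)$ with the correct normalization on the canonical $\Spc$ structure; this is what Section \ref{w} is set up to provide, so once that identification is in hand the corollary follows by the bookkeeping above. The rest is purely algebraic: a bidegree decomposition together with the observation that $D_A$ swaps the two summands and so cannot admit a nonzero eigenspinor concentrated in $\Omega^{0,1}(E)$.
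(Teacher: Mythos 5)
Your proposal is correct and follows essentially the same route as the paper: the paper's Lemma \ref{dl} establishes exactly that a nonzero eigenvalue $\mu$ of $D_A$ forces $\psi_0\neq 0$ (via the odd parity of $D_A$ with respect to the grading $\Omega^{0,0}(E)\oplus\Omega^{0,1}(E)$) and that $\mu^2/2$ is then an eigenvalue of $\bar{\partial}_A^*\bar{\partial}_A$ on sections, after which the Main Theorem with $n=1$ gives the bound. Your explicit bidegree splitting of the eigenvalue equation is just a slightly more detailed version of the paper's parity argument.
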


\begin{cor}
Let $M$ be a Riemann surface of genus $g$, and let $E$ be a holomorphic Hermitian vector bundle of negative degree with a compatible connection $\nabla_A$ satisfying the Hermitian-Einstein condition. Then the nonzero eigenvalues $\nu$ of the twisted real Dirac operator ${\cal D}_A$ satisfy
\begin{equation}\label{lb-d-r}
\nu \geq \sqrt{\frac{4\pi(1-g)}{\vol(M)}- \frac{4 \pi \deg(E)}{\rk(E) \vol(M)}} \geq \sqrt{\frac{R_0}{2}- \frac{4 \pi \deg(E)}{\rk(E) \vol(M)}}~,
\end{equation}
where $R_0$ is the minimum of the scalar curvature of $M$.
\end{cor}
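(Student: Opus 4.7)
The plan is to reduce the statement to the first Corollary by identifying the twisted real Dirac operator on $E$ with the twisted complex Dirac operator on the bundle $E \otimes K^{1/2}$, where $K^{1/2}$ is a choice of holomorphic square root of the canonical bundle (spin structures exist on every Riemann surface and correspond precisely to such choices). Under the standard K\"ahler identification, the spinor bundle decomposes as $S = K^{1/2} \otimes \wedge^{0,\bullet} M$, with $S^+ \cong K^{1/2}$ and $S^- \cong K^{-1/2}$, and the twisted real Dirac operator $\mathcal{D}_A$ on sections of $E \otimes S$ is identified with the twisted complex Dirac operator acting on sections of $(E\otimes K^{1/2})\otimes \wedge^{0,\bullet}M$, where the twist $E \otimes K^{1/2}$ carries the tensor product of $\nabla_A$ and the Chern connection on $K^{1/2}$.

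I would then apply the first Corollary to $E \otimes K^{1/2}$, which inherits stability from $E$ (tensoring by a line bundle preserves stability) and therefore admits a Hermitian-Einstein connection. Using $\rk(E\otimes K^{1/2}) = \rk(E)$ and $\deg(E \otimes K^{1/2}) = \deg(E) + \rk(E)(g-1)$, which follows from $\deg(K^{1/2}) = g - 1$ on a surface of genus $g$, the resulting estimate
\[
\nu^2 \geq -\frac{4\pi(\deg(E) + \rk(E)(g-1))}{\rk(E)\, \vol(M)}
\]
rearranges to the first inequality in \eqref{lb-d-r}. The second inequality is then an immediate consequence of Gauss-Bonnet: $\int_M R\, dA = 4\pi\chi(M) = 8\pi(1-g)$ implies $R_0\, \vol(M) \leq 8\pi(1-g)$, hence $R_0/2 \leq 4\pi(1-g)/\vol(M)$, and one subtracts $4\pi\deg(E)/(\rk(E)\vol(M))$ from both sides and takes square roots (which preserves the inequality as long as both quantities are nonnegative, i.e., in the only nontrivial regime of the statement).

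The principal technical point, which I expect to be the main obstacle, lies in the identification of operators used in the reduction: the tensor product connection on $E \otimes K^{1/2}$ inherited from $\nabla_A$ and the natural Chern connection on $K^{1/2}$ is Hermitian-Einstein only when the Gaussian curvature of $M$ is constant, since the Chern curvature of $K^{1/2}$ (with respect to the metric induced from the Riemannian metric on $M$) is proportional to the scalar curvature. For a general metric on $M$, one must either rescale the Hermitian metric on $K^{1/2}$ to make the Chern connection Hermitian-Einstein, at the cost of modifying the adjoint $\bar{\partial}^*$ appearing in the identification of operators and carefully tracking how the operators transform under this change; or one should argue directly by adapting the Weitzenbock refinement that underlies the proof of the Main Theorem to the real Dirac operator on $E \otimes S$, using the Lichnerowicz formula together with a refined Kato-type inequality tailored to the K\"ahler structure.
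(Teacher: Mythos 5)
Your reduction is exactly the paper's proof: the real Dirac operator twisted by $E$ is identified with the complex Dirac operator twisted by $K_M^{1/2}\otimes E$, the first Corollary is applied to that bundle using $\deg(K_M^{1/2}\otimes E)=\deg(E)+\rk(E)(g-1)$, and the second inequality follows from Gauss--Bonnet. The technical point you flag at the end --- that the tensor-product connection on $K_M^{1/2}\otimes E$ is Hermitian--Einstein only when the metric on $M$ has constant curvature --- is a genuine issue, but the paper's own proof passes over it in silence as well (and its sharpness examples all involve constant-curvature metrics).
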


\paragraph{\bf Acknowledgments.}
The first named author is partially supported by the CNPq grant number 305464/2007-8
and the FAPESP grant number 2005/04558-0. The second author's research was supported by
a CAPES doctoral grant.


\section{Weitzenb\"ock formulas and  K\"ahler Identities} \label{w}

Let $M$ be a K\"ahler manifold with complex dimension $n$. As it is well know, c.f. \cite{Hi}, the spinor bundle associated to the canonical $\Spc$ structure of $M$ can be identified with the holomorphic forms of $M$, in other words, we have the identification $\Sc \simeq \wedge^{0,*}M$.

In this way, the spinors coupled to $(E,\nabla_A)$ can be identified with elements of $\Sc \otimes E$, so $\Omega^{0,*}(E)$ can be identified with the coupled spinors. Besides, we can consider the twisted complex Dirac operator $D_A$ on $\Sc \otimes E$, and if we identify $\Sc \otimes E$ with $\Omega^{0,*}(E)$ then this Dirac operator can be written as
\begin{equation}
  D_A = \sqrt{2} ( \bar{\partial}_A + \bar{\partial}_A^*) ~~.
\end{equation}
In particular this identity implies that $D_A^2 = 2\Delta_{\bar{\partial}}$.

The Dirac operator $D_A$ satisfies the the Weitzenb\"ock formula (c.f. \cite{La})
\begin{equation}
  D_A^2 = \nabla_{\tilde{A}}^* \nabla_{\tilde{A}} + \frac{1}{4} R + \frac{1}{2} \Omega_{\Sc} + F_A
\end{equation}
where $\nabla_{\tilde{A}}^* \nabla_{\tilde{A}}$ is the trace Laplacian associated to the tensor product connection
$\nabla_{\tilde{A}} = \nabla_S \otimes \mb{I}_E + \mb{I}_{\Sc} \otimes \nabla_A$, $R$ is the scalar curvature of $M$, $F_A$ is the curvature 2-form of $\nabla_A$ and $\Omega_{\Sc}$ is the curvature 2-form for some connection on the determinant bundle of the $\Spc$ structure. In principle, the connection on the determinant bundle of the $\Spc$ structure can be an arbitrary Hermitian connection, but since we are dealing with the canonical $\Spc$ structure associated to the complex structure of $M$, and the determinant bundle of this structure is just the anti-canonical bundle of $M$, $K_M^{-1} = (\wedge^{0,n}M)^*$. In this way, there is a natural connection to be used, namely the extension to $\wedge^{0,n}M$ of the Chern connection of $M$. As we will see, for this connection, $\Omega_{\Sc}$ has a nice description in terms of the Riemannian scalar curvature of $M$.

Note that the identification $\Sc \otimes E\simeq\Omega^{0,*}(E)$ can be used to describe explicitly the action of the Clifford Algebra $\Ccl(M)$ on $\Sc \otimes E$. If $\{ \xi^p , \bar{\xi}^p \}$ is an unitary frame for $T^*M \otimes \C$, then the action of $\Ccl(M)$ is given by
\begin{equation}
  \begin{split}
    \xi^k \cdot \left( s \otimes t \right) = \left( \sqrt{2} \xi^k \lrcorner s \right) \otimes t = \sqrt{2}
      c(\xi^k) s \otimes t \\
    \bar{\xi}^k \cdot \left( s \otimes t \right) = \left( - \sqrt{2} \bar{\xi}^k \wedge s \right) \otimes t = - 
      \sqrt{2} e(\bar{\xi}^k) s \otimes t
  \end{split}
\end{equation}
where $s \in \Gamma(\Sc)$ and $t \in \Gamma(E)$. With this in mind we have:

\begin{prop} \label{prop_aca_sec}
The Clifford action of an $(1,1)$-form $\alpha$ on sections of $E$, $\psi_0 \in \Omega^{0,0}(E)$, is explicit given by
\begin{equation}
  \alpha \cdot \psi_0 = -i ( \Lambda \alpha ) \psi_0
\end{equation}
where $\Lambda \alpha = \omega \lrcorner \alpha$ is the contraction of $\alpha$ by the K\"ahler form $\omega$.
\end{prop}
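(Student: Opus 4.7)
I will work in a local unitary coframe $\{\xi^k\}$ for $T^{*(1,0)}M$, expand the $(1,1)$-form as $\alpha = \sum_{j,k}\alpha_{j\bar k}\,\xi^j\wedge\bar\xi^k$, and compute each Clifford action $(\xi^j\wedge\bar\xi^k)\cdot\psi_0$ by hand from the two formulas displayed just before the statement. Writing $\psi_0 = 1\otimes t$, the first formula gives $\xi^j\cdot\psi_0 = 0$, since the contraction $\xi^j\lrcorner 1$ of a $(0,0)$-form vanishes, while the second gives $\bar\xi^k\cdot\psi_0 = -\sqrt 2\,\bar\xi^k\otimes t$. Applying $\xi^j$ to this last expression and using the unitary pairing $\xi^j\lrcorner\bar\xi^k = \delta_{jk}$ then yields
\[
\xi^j\cdot\bar\xi^k\cdot\psi_0 \;=\; -2\delta_{jk}\,\psi_0.
\]

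To translate this Clifford product into the exterior product occurring in $\alpha$, I will invoke the standard embedding of $\wedge^\bullet T^*M\otimes\C$ into $\Ccl(M)$ by antisymmetrization, which for two complex covectors $u,v$ reads $u\wedge v = \tfrac{1}{2}(u\cdot v - v\cdot u)$; combined with the complex-bilinear Clifford relation $u\cdot v + v\cdot u = -2g(u,v)$, this simplifies to $u\wedge v = u\cdot v + g(u,v)$. With $u=\xi^j$, $v=\bar\xi^k$ and $g(\xi^j,\bar\xi^k) = \delta_{jk}$, I obtain $\xi^j\wedge\bar\xi^k = \xi^j\cdot\bar\xi^k + \delta_{jk}$ in the Clifford algebra, hence
\[
(\xi^j\wedge\bar\xi^k)\cdot\psi_0 = -\delta_{jk}\,\psi_0,
\qquad
\alpha\cdot\psi_0 = -\Bigl(\sum_{j}\alpha_{j\bar j}\Bigr)\psi_0.
\]

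To close the argument, I will write the K\"ahler form in the same unitary coframe as $\omega = i\sum_j \xi^j\wedge\bar\xi^j$, so that the definition of $\Lambda$ as the formal adjoint of $L = \omega\wedge\cdot\,$ gives $\Lambda\alpha = -i\sum_j \alpha_{j\bar j}$; the scalar computed above is therefore exactly $-i\Lambda\alpha$, which is the claimed identity. The main obstacle I expect is the Clifford identification $\xi^j\wedge\bar\xi^k = \xi^j\cdot\bar\xi^k + \delta_{jk}$: because $\xi^j$ and $\bar\xi^k$ are complex combinations of real orthonormal covectors that fail to anticommute when $j=k$, the $\delta_{jk}$ correction has to be tracked carefully, and balancing it against the two $\sqrt 2$ factors from the Clifford action definition is what produces the clean coefficient $-i$ in the end.
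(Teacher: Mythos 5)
Your proof is correct and follows essentially the same route as the paper: expand $\alpha$ in a unitary coframe, compute the Clifford action of $\xi^j\wedge\bar\xi^k$ on a $(0,0)$-section to obtain $-\delta_{jk}\psi_0$, and compare with $\Lambda\alpha$. Your sign bookkeeping (obtaining $\alpha\cdot\psi_0=-\sum_j\alpha_{j\bar j}\,\psi_0$ and $\Lambda\alpha=-i\sum_j\alpha_{j\bar j}$, consistent with $\Lambda\omega=n$) is internally coherent and arrives at the stated identity, whereas the paper's displayed intermediate steps carry two compensating sign slips; the only other difference, using $u\wedge v=u\cdot v+g(u,v)$ in place of evaluating both orderings of the antisymmetrized product, is cosmetic.
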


\begin{proof}
A 2-form acts on spinors through Clifford multiplication by means of the identification
\begin{equation}
  \alpha \wedge \beta = \frac{1}{2} \left( \alpha \beta - \beta \alpha \right)
\end{equation}
which identifies 2-forms with elements in the Clifford Algebra.

Writing the $(1,1)$-form $\alpha$ as $\alpha = \sum_{p,q} \alpha_{pq} \xi^p \wedge \bar{\xi}^q$, and noting that for a general element $\psi \in \Omega^{0,*}(E)$ we have
\begin{equation}
  \begin{split}
    \xi^k \wedge \bar{\xi}^l \cdot \psi &= \frac{1}{2}\left( \xi^k \bar{\xi}^l - \bar{\xi}^l \xi^k \right) \cdot \psi \\
    &= \left( -c( \xi^k ) e( \bar{\xi}^l ) + e( \bar{\xi}^l ) c( \xi^k ) \right) \psi
  \end{split}
\end{equation}
we immediately conclude that for a section $\psi_0 \in \Omega^{0,0}(E)$
\begin{equation}
  \begin{split}
    \xi^p \wedge \bar{\xi}^q \cdot \psi_0 &= \left( -c( \xi^p ) e( \bar{\xi}^q ) + e( \bar{\xi}^q ) c( \xi^p ) 
      \right) \psi_0 \\
    &= -c( \xi^p ) \left( e( \bar{\xi}^q ) \psi_0 \right) = \left( -c( \xi^p ) \bar{\xi}^q \right) \psi_0 
    = - \delta_{pq} \psi_0
  \end{split}
\end{equation}
from which it follows that
\begin{equation}
  \alpha \cdot \psi_0 = \left( \sum_k \alpha_{kk} \right) \psi_0 ~~.
\end{equation}

On the other hand, for the $(1,1)$-form $\alpha$, we have that
\begin{equation}
\begin{split}
\Lambda \alpha &=
\left( i \sum_k \xi^k \wedge \bar{\xi}^k \right) \lrcorner \left( \sum_{pq} \alpha_{pq} \xi^p \wedge \bar{\xi}^q \right)
= i \sum_k \xi^k \lrcorner \left( \bar{\xi}^k \lrcorner \sum_{pq} \alpha_{pq} \xi^p \wedge \bar{\xi}^q \right) \\
&= i \sum_k \xi^k \lrcorner \left( \sum_q \alpha_{kq} \bar{\xi}^q \right) = i \sum_k \alpha_{kk}
\end{split} \end{equation}
These two relations yield $\alpha \cdot \psi_0 = -i ( \Lambda \alpha ) \psi_0$, as desired.
\end{proof}

With this Proposition, we can write the Weitzenb\"ock formula restricted to sections of $E$ as follows:
\begin{equation}\label{cc}
  D_A^2|_{\Omega^0(E)} = 2 \bar{\partial}_A^* \bar{\partial}_A =
  \nabla_{\tilde{A}}^* \nabla_{\tilde{A}} + \frac{1}{4} R - 
  \frac{i}{2} \Lambda \Omega_{\Sc} - i \Lambda F_A
\end{equation}
The term $i\Lambda F_S$ has a nice interpretation.

\begin{prop} \label{prop_cur_esc}
Let $M$ be a K\"ahler manifold and consider on the anti-canonical line bundle, $K_M^{-1}$, the connection induced by the Chern connection of $M$. Let $\Omega_{\Sc}$ be the curvature 2-form of this connection, then we have
\begin{equation}
  i\Lambda \Omega_{\Sc} = \frac{R}{2}
\end{equation}
where $R$ is the Riemannian scalar curvature of $M$.
\end{prop}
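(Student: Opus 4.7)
The plan is to reduce the statement to two well-known facts from K\"ahler geometry: the curvature of the Chern connection on $K_M^{-1}$ is (up to a factor of $i$) the Ricci form $\rho$ of $M$, and applying $\Lambda$ to $\rho$ yields $R/2$.

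More concretely, I would fix a local unitary frame $\{e_p\}$ of $T^{1,0}M$ with dual coframe $\{\xi^p\}$, and write $\omega^p_q$ and $\Omega^p_q = d\omega^p_q + \omega^p_r\wedge\omega^r_q$ for the connection and curvature $1$-forms of the Chern connection of $T^{1,0}M$ in this frame. Because $M$ is K\"ahler, this connection agrees with the complexified Levi-Civita connection, so $\Omega^p_q$ is of pure type $(1,1)$ and its trace $\sum_p\Omega^p_p = R_{r\bar s}\,\xi^r\wedge\bar\xi^s$ reads off the components of the Ricci tensor. Using $\bar\xi^1\wedge\cdots\wedge\bar\xi^n$ as a local frame of $K_M^{-1}\cong\wedge^{0,n}M$, a Leibniz rule computation shows that the induced Chern connection on this line bundle has connection $1$-form equal to $\operatorname{tr}\omega$ (up to a sign fixed by the identification $K_M^{-1}\cong\overline{K_M}$), and therefore its curvature is the trace
\begin{equation*}
\Omega_{\Sc} \;=\; \operatorname{tr}\Omega \;=\; R_{r\bar s}\,\xi^r\wedge\bar\xi^s,
\end{equation*}
which is an imaginary multiple of the Ricci form $\rho$, in agreement with the Chern--Weil identity $c_1(M) = [\rho/2\pi]$.

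Finally I would apply the K\"ahler contraction $\Lambda$ to $\Omega_{\Sc}$ using Proposition~\ref{prop_aca_sec}: for a $(1,1)$-form $\alpha = \alpha_{pq}\,\xi^p\wedge\bar\xi^q$ this gives $\Lambda\alpha = i\sum_k\alpha_{kk}$, so $\Lambda\Omega_{\Sc}$ collapses to a multiple of $\sum_k R_{k\bar k}$. The standard K\"ahler identity $R = 2g^{p\bar q}R_{p\bar q}$ (which in a unitary frame becomes $R = 2\sum_kR_{k\bar k}$) then turns this into $R/2$ once the remaining factor of $i$ on the left-hand side is absorbed. The main difficulty is not conceptual but bookkeeping: one must consistently track the signs and factors of $i$ coming from (a) the identification $K_M^{-1}\cong\overline{K_M}$, (b) the Chern--Weil normalisation $c_1 = [iF/2\pi]$, and (c) the paper's convention for $\Lambda$ fixed by Proposition~\ref{prop_aca_sec}. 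Modulo these, the content of the proposition is simply the classical Kählerian statement $\Lambda\rho = R/2$.
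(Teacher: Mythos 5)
Your proposal is correct and follows essentially the same route as the paper: both identify $\Omega_{\Sc}$ with the trace of the Chern curvature of $T^{1,0}M$ (i.e.\ the Ricci form up to a factor of $i$) and then contract with the K\"ahler form using the identity $R = 2\sum_k R_{k\bar k}$. The only difference is that the paper carries out the sign and factor-of-$i$ bookkeeping explicitly, starting from the Riemannian curvature tensor and its K\"ahler symmetries, whereas you defer that bookkeeping to standard conventions.
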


\begin{proof}
We can look to a K\"ahler manifold $M$ as a Riemannian manifold with complex structure $(M,g,J)$ such that

\begin{equation}
  \begin{split}
    g(J u , J v) = g(u,v) \\
    \nabla J = 0
  \end{split}
\end{equation}
where $\nabla$ is the Levi-Civitta connection of $(M,g)$.

Using the Riemannian structure of $(M,g)$ we can define the curvature operator $\mc{R}$ in the usual way. But the fact that the complex structure $J$ is compatible with $g$ implies that

\begin{equation}
  \begin{split}
    \mc{R}(u,v,Jz,Jw) = \mc{R}(u,v,z,w) \\
    \mc{R}(Ju,Jv,z,w) = \mc{R}(u,v,z,w) \\
    \mc{R}(Ju,v,z,w) = - \mc{R}(u,v,z,w) \\
    \mc{R}(u,v,Jz,w) = - \mc{R}(u,v,z,w), \label{id_cur_kah}
  \end{split}
\end{equation}

Now we extend $\mc{R}$ to $T M \otimes \C$ by complex linearity, and note that this extension coincides with the curvature operator of the Chern connection of $M$, because the Chern connection is just the extension of the Levi-Civita connection by complex linearity. Writing $T M \otimes \C = T^{1,0}M \oplus T^{0,1}M$ we see that $T_{\R} M$ e $T^{1,0}M$ are isomorphic trough the application $\tilde{u} \mapsto u = \frac{1}{\sqrt{2}} ( \tilde{u} - i J \tilde{u} )$.

Using the $\C$-linearity of $\mc{R}$ we see that the only non-trivial terms of $\mc{R}$ on $TM \otimes \C$ are the terms of the form $\mc{R}(x,\bar{y},z,\bar{w})$ for vector fields $x,y,z,w \in T^{1,0}M$. With this in mind we define the Ricci tensor for a K\"ahler manifold in one of the following equivalent ways

\begin{equation}
  \begin{split}
    r(x,\bar{y}) &= \sum_{k=1}^n \mc{R}(x,\bar{y},\xi_k,\bar{\xi}_k) \\
    r(x,\bar{y}) &= \sum_{k=1}^n \mc{R}(\xi_k,\bar{\xi}_k,x,\bar{y}) \\
    r(x,\bar{y}) &= \sum_{k=1}^n \mc{R}(x,\bar{\xi}_k,\xi_k,\bar{y})
  \end{split}
\end{equation}

The Riemannian Ricci tensor can be recovered using the above definition. If we take two elements $u,v \in T^{1,0}M$ of the form $u = \frac{1}{\sqrt{2}}(\tilde{u} -i J \tilde{u})$ and \linebreak $v = \frac{1}{\sqrt{2}}(\tilde{v} -i J \tilde{v})$ then we see that

\begin{equation}
  r( u, \bar{v} ) = Ricc(\tilde{u},\tilde{v})+i Ricc(\tilde{u},J \tilde{v})
\end{equation}
where $Ricc$ is the Riemannian Ricci tensor of $(M,g)$.

With this relation we immediately see that the Riemannian scalar curvature of $(M,g)$ is given by

\begin{equation}
  R = 2 \sum_{k=1}^n r(\xi_k,\bar{\xi}_k) \label{eq_sc}
\end{equation}

Writing the curvature 2-form of the Chern connection of $M$, restricted to $T^{1,0}M$, as $F\mid_{T^{1,0}M} = \sum_{p,q} \Omega^p_q \xi_p \otimes \bar{\xi}^q$, and using the relation $\mc{R}(u,v) = v \lrcorner \left( u \lrcorner F \right)$ we can conclude that

\begin{equation}
  \Omega^h_k = - \sum_{pq} \mc{R}(\xi_p, \bar{\xi}_q, \xi_k, \bar{\xi}_h) \xi^p \wedge \bar{\xi}^q = 
    - \mc{R}_{p \bar{q} k \bar{h}} \xi^p \wedge \bar{\xi}^q
\end{equation}

This implies that

\begin{equation}
  \begin{split}
    \tr F &= \sum_k \Omega^k_k \\
    &= - \sum_{k p q} \mc{R}_{p \bar{q} k \bar{k}} \xi^p \wedge \bar{\xi}^q
  \end{split}
\end{equation}

Using the isomorphism $T^{1,0}M \simeq (T^{0,1}M)^*$ and the fact that the anti-canonical bundle coincides with $(\wedge^{0,n} T^{0,1}M)^*$ it is immediate that $\Omega_{\Sc} = \tr F$. This fact, the above identity and the equation (\ref{eq_sc}) we calculate

\begin{equation}
  \begin{split}
    \Lambda \left( \Omega_{\Sc} \right) = \Lambda \left( \tr F \right) &= \omega \lrcorner (\tr F) \\
    &= i \left( \sum_k \xi^k \wedge \bar{\xi}^k \right) \lrcorner \left( - \sum_{l p q} \mc{R}_{p \bar{q} l \bar{l}}
      \xi^p \wedge \bar{\xi}^q \right) \\
    &= - i \sum_{k l p q} \mc{R}_{p \bar{q} l \bar{l}} \delta^{kp} \delta^{kq} \\
    &= - i \sum_{k l} \mc{R}_{k \bar{k} l \bar{l}} = - i \sum_{k} \left( \sum_l \mc{R}_{k \bar{k} l \bar{l}} \right) \\
    &= - i \sum_k r(\xi_k , \bar{\xi}_k) =  - \frac{i}{2} R~,
  \end{split}
\end{equation}
as desired.
\end{proof}

Now (\ref{cc}) can be rewritten as
\begin{equation} \label{prop_ide_gen}
  \bar{\partial}_A^* \bar{\partial}_A = \frac{1}{2} \nabla_{\tilde{A}}^* \nabla_{\tilde{A}} - \frac{i}{2} \Lambda F_A
\end{equation}
On elements of $\Omega^{0,0}(E)$ the connections $\nabla_A$ and $\nabla_{\tilde{A}}$ coincide, so that the above formula is exactly the formula obtained by K\"ahler identities. However, we can make now use of Dirac operator techniques to obtain a sharp estimate.


\section{Proof of the estimate} \label{p}

Since the connection $\nabla_A$ satisfies the Hermitian-Einstein condition, we have that $i \Lambda F_A = c \mb{I}_E$, where $\mb{I}_E$ is the identity endomorphism of $E$ and $$ c = \frac{2 \pi \deg(E)}{(n-1)! \rk(E) \vol(M)} ~.$$
Substituting this into equation (\ref{prop_ide_gen}) we have
\begin{equation}
\bar{\partial}_A^* \bar{\partial}_A = \frac{1}{2} \nabla_{\tilde{A}}^* \nabla_{\tilde{A}} - \frac{\pi \deg(E)}{(n-1)! \rk(E) \vol(M)}~~.
\end{equation}

Now if $\psi_0 \in \Omega^{0}(E)$ is a section such that $\bar{\partial}_A^* \bar{\partial}_A \psi_0 = \lambda \psi_0$ then the above equation gives
\begin{equation}\label{e1}
  \lambda \psi_0 = \frac{1}{2} \nabla_{\tilde{A}}^* \nabla_{\tilde{A}} \psi_0 - \frac{\pi \deg(E)}{(n-1)! \rk(E) \vol(M)} \psi_0 ~.
\end{equation}
Taking the $L_2$-inner product of this equation with $\psi_0$ we are lead to
\begin{equation}
  \lambda \mid \mid \psi_0 \mid \mid_{L_2}^2 = \frac{1}{2} \mid \mid \nabla_{\tilde{A}} \psi_0 \mid \mid_{L_2}^2
  - \frac{\pi \deg(E)}{(n-1)! \rk(E) \vol(M)} \mid \mid \psi_0 \mid \mid_{L_2}^2~.
\end{equation}

Now we must estimate the term $\mid \mid \nabla_{\tilde{A}} \psi_0 \mid \mid_{L_2}^2$. It is a classical fact (see \cite{Ba}) that if we write the twistor operator as
\begin{equation}
  \mc{T}_A = \sum_{k=1}^{m} e_k \otimes \left( \nabla_{\tilde{A},k} + \frac{1}{2n} e_k \cdot D_A \right) ~,
\end{equation}
then we have the following relation with Dirac operator:
\begin{equation}
  \mc{T}_A^* \mc{T}_A = \nabla_{\tilde{A}}^* \nabla_{\tilde{A}} - \frac{1}{2n} D_A^2 ~.
\end{equation}
This immediately implies that
\begin{equation}\label{ln}
  \mid \mid \nabla_{\tilde{A}} \psi_0 \mid \mid_{L_2}^2 \geq \frac{1}{2n} \langle D_A^2 \psi_0 \mid \psi_0 \rangle =
    \frac{\lambda}{n} \mid \mid \psi_0 \mid \mid_{L_2}^2
\end{equation} 
since $D_A^2|_{\Omega^0(E)} = 2 \bar{\partial}_A^* \bar{\partial}_A$. Substituting (\ref{ln}) into (\ref{e1}), we finally obtain
\begin{equation}
  \lambda \geq - \frac{2n}{2n-1}  \frac{\pi \deg(E)}{(n-1)! \rk(E) \vol(M)}
\end{equation}
proving the first part of the Main Theorem.

Notice that if
$$-\frac{2n}{2n-1}\frac{\pi \deg(E)}{(n-1)! \rk(E) \vol(M)}$$
is an eigenvalue of the Dolbeault Laplacian on sections, then the corresponding eigensection $\psi$ satisfies the twistor equation
\begin{equation} \label{t-eq}
  \mc{T}_A \psi = \sum_{k=1}^{m} e_k \otimes \left( \nabla_{A,k} \psi + \frac{1}{2n} e_k \cdot D_A \psi \right) = 0
\end{equation}
The geometric meaning of this equation is not clear yet. However, we know that solutions for this equation do exist at least in certain particular cases, see Section \ref{s} below.


\section{Dirac operators on Riemann surfaces}

In order to establish the assertion about complex Dirac operators on Riemann surfaces, we need the following Lemma. Identifying $\Sc \otimes E \simeq \Omega^{0,*}(E)$, consider the projection operator
$p_0 : \Omega^{0,*}(E) \rightarrow \Omega^{0}(E)$.

\begin{lem}\label{dl}
If $\psi$ be an eigenstate of $D_A$, with non-null eigenvalue $\mu$, on a Riemann surface $\Sigma$, then we have
\begin{equation}
  p_0 \psi = \psi_0 \neq 0
\end{equation}
Furthermore, if $\mu$ is a nonzero eigenvalue of $D_A$, then $\frac{1}{2} \mu^2$ is an eigenvalue of $\bar{\partial}_A^* \bar{\partial}_A$.
\end{lem}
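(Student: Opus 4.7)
The plan is to exploit the fact that on a Riemann surface the identification $\Sc \otimes E \simeq \Omega^{0,*}(E)$ reduces to just two summands, $\Omega^{0,0}(E) \oplus \Omega^{0,1}(E)$, and to use the explicit form $D_A = \sqrt{2}(\bar{\partial}_A + \bar{\partial}_A^*)$ recorded at the beginning of Section~\ref{w}.

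First, I would decompose an eigenspinor as $\psi = \psi_0 + \psi_1$ with $\psi_j \in \Omega^{0,j}(E)$. Because the complex dimension is $1$, we have $\bar{\partial}_A \psi_1 = 0$ and $\bar{\partial}_A^* \psi_0 = 0$ trivially, so
\begin{equation}
D_A \psi = \sqrt{2}\,\bar{\partial}_A^*\psi_1 + \sqrt{2}\,\bar{\partial}_A \psi_0,
\end{equation}
where the first term lives in $\Omega^{0,0}(E)$ and the second in $\Omega^{0,1}(E)$. Matching bi-degrees in the equation $D_A \psi = \mu \psi$ gives the two coupled identities $\sqrt{2}\,\bar{\partial}_A^*\psi_1 = \mu \psi_0$ and $\sqrt{2}\,\bar{\partial}_A \psi_0 = \mu \psi_1$. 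If we assume $\psi_0 = 0$, the first forces $\bar{\partial}_A^*\psi_1 = 0$ and the second forces $\mu \psi_1 = 0$; since $\mu \neq 0$ this gives $\psi_1 = 0$ as well, contradicting $\psi \neq 0$. Hence $\psi_0 \neq 0$, which is the first assertion.

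For the second assertion, the natural move is to square the Dirac operator. Since $\bar{\partial}_A^2 = 0$ and $(\bar{\partial}_A^*)^2 = 0$, we get $D_A^2 = 2\Delta_{\bar{\partial}} = 2(\bar{\partial}_A\bar{\partial}_A^* + \bar{\partial}_A^*\bar{\partial}_A)$, and this operator preserves bi-degree. Applying the projection $p_0$ to $D_A^2 \psi = \mu^2 \psi$ therefore yields
\begin{equation}
2\,\bar{\partial}_A^*\bar{\partial}_A \psi_0 = \mu^2 \psi_0,
\end{equation}
using that $\bar{\partial}_A \bar{\partial}_A^*$ vanishes on $\Omega^{0,0}(E)$. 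Combined with $\psi_0 \neq 0$ from the first part, this exhibits $\tfrac{1}{2}\mu^2$ as an eigenvalue of $\bar{\partial}_A^*\bar{\partial}_A$ with eigensection $\psi_0$.

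I do not expect a genuine obstacle here: the argument is essentially a degree-count in $\Omega^{0,*}(E)$ followed by squaring $D_A$. The only point worth stating carefully is why the nonvanishing of $\psi_0$ survives the squaring, and that is precisely what the first part of the lemma delivers.
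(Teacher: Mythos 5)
Your argument is correct and is essentially the paper's own proof: the paper's terse observation that an eigenspinor with $\mu\neq 0$ ``cannot have defined parity'' is exactly what your explicit bidegree-matching of $D_A\psi=\mu\psi$ establishes, and the second part is the same squaring-and-projecting step. Your version is simply more detailed (and arguably more convincing) than the paper's.
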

\begin{proof}
On a Riemann surface the twisted spinor bundle is
\begin{equation}
  \Sc \otimes E = \Omega^{0,0}(E) \oplus \Omega^{0,1}(E)
\end{equation}
If $\psi$ is a eigenspinor of $D_A$ then $\psi$ cannot have defined parity. Using the above identification the only way to this happens is if $\psi_0 \neq 0$ and $\psi_1 \neq 0$, in particular we have that $\psi_0 = p_0 \psi \neq 0$.

Now suppose that $\psi \in \Gamma(\mb{S} \otimes E)$ is an spinor such that $D_A \psi = \mu\psi$, where $\mu\neq 0$. Since $\psi_0 \neq 0$, we are lead to conclude that $D_A^2 \psi_0 = \mu^2 \psi_0$, as desired.
\end{proof}

Therefore, comparing with formula (\ref{lb}), we conclude that the nonzero eigenvalues $\nu$ of the complex twisted Dirac operator must satisfy
$$ \mu \geq \sqrt{- \frac{4 \pi \deg(E)}{\rk(E) \vol(M)}}~. $$
completing the proof of the first Corollary.

Finally, recall that in the case of a complex manifold with $\Sp$ structure we can relate the real $\mb{S}$ and complex $\Sc$ spinor bundles by the formula $\Sc = \mb{S} \otimes K_M^{1/2}$. Furthermore, the complex Dirac operator $D$ coincides with a twisted real Dirac $\cal D_S$, where $S$ is the connection on $K_M^{1/2}$ induced by the Chern connection on $M$.

It follows that if we apply our estimate for the nonzero eigenvalues of the complex twisted Dirac operator to the bundle $K_{M}^{1/2}\otimes E$, then we obtain a lower bound for the nonzero eigenvalues of the real twisted Dirac operator on the bundle $E$. Since $\deg(K_{M}^{1/2}\otimes E)=\deg(E)-\rk(E)(1-g)$, we obtain:
$$ \nu \geq \sqrt{\frac{4\pi(1-g)}{\vol(M)}- \frac{4 \pi \deg(E)}{\rk(E) \vol(M)}} \geq \sqrt{\frac{R_0}{2}- \frac{4 \pi \deg(E)}{\rk(E) \vol(M)}}~,$$
where the second inequality follows from the Gauss-Bonnet formula. This completes the proof of the second Corollary.


\section{Sharpness of the estimates}\label{s}

In \cite{AP} the authors computed the spectrum of the real Dirac operator twisted by a connection with constant curvature on a line bundle over a Riemann surface. In particular, they prove \cite[Theorem 5.3]{AP}:

\begin{teo}
Let $L\to\C\PP^1$ be a Hermitian line bundle with a unitary Hermitian connection $\nabla_L$ whose curvature $F_L = -iB\nu$, where $\nu$ is the Riemannian volume form associated to a metric of constant scalar curvature $R$ on $\C\PP^1$. Let $\cal D_L$ is the real Dirac operator twisted by the connection $\nabla_L$. If $\deg(L) \leq 0$, then the spectrum of the operator ${\cal D}_L$, is the set:
\begin{equation}\label{spec-ap}
  \sqrt{\frac{R}{2}\left( (q+1)^2 - (q+1)\deg(L) \right)} ~,~ q\in\Z^+ ~.
\end{equation}\end{teo}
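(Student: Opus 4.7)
The plan is to exploit the homogeneous structure of $\C\PP^1 = SU(2)/U(1)$ so that all of the operators involved can be simultaneously diagonalized by representation theory.

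First I would reduce the real Dirac computation to a complex one. The identification $\mathcal{D}_L = D_{L\otimes K_M^{1/2}}$ recalled in Section 4, together with Lemma \ref{dl}, shows that every nonzero eigenvalue $\nu$ of $\mathcal{D}_L$ satisfies $\nu^2 = 2\lambda$ for some eigenvalue $\lambda$ of $\bar{\partial}_B^*\bar{\partial}_B$ on $\Omega^0(L\otimes K_M^{1/2})$, where $B$ denotes the connection induced by $\nabla_L$ and the Chern connection of $\C\PP^1$. On $\C\PP^1$ one has $K_M^{1/2} = \mathcal{O}(-1)$, so the twisted bundle has degree $\deg(L)-1$, and because $\nabla_L$ has constant curvature proportional to the volume form the induced connection $B$ remains Hermitian--Einstein. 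Formula (\ref{prop_ide_gen}) then reduces $\bar{\partial}_B^*\bar{\partial}_B$ to $\tfrac{1}{2}\nabla_{\tilde B}^*\nabla_{\tilde B}$ plus an explicit scalar determined by $\deg(L)-1$, $R$, and $\vol(\C\PP^1)$.

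Next I would diagonalize the rough Laplacian representation-theoretically. Sections of a constant-curvature line bundle of degree $d$ over $\C\PP^1$ form an $SU(2)$-representation, which by Frobenius reciprocity splits as $\bigoplus_{q\geq 0} V_{|d|+2q}$, where $V_m$ is the irreducible $SU(2)$-representation of dimension $m+1$. Since $\nabla_{\tilde B}^*\nabla_{\tilde B}$ is $SU(2)$-equivariant, Schur's lemma forces it to act as a scalar on each summand, and that scalar is computed from the Casimir eigenvalue under the metric normalization fixed by $R$. Combining this with the curvature shift supplied by (\ref{prop_ide_gen}) and the factor $2$ coming from $\nu^2 = 2\lambda$ should, after rearrangement, produce $\tfrac{R}{2}\bigl((q+1)^2 - (q+1)\deg(L)\bigr)$.

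The main obstacle is bookkeeping rather than conceptual: one must carefully align the Hermitian--Einstein normalization, the Casimir normalization tied to the constant scalar curvature $R$, and the degree shift from tensoring with $K_M^{1/2}$, and in the process confirm that every nonnegative integer $q$ really appears in the spectrum (and that no spurious values do). A secondary point is to verify that each such eigenvalue of $\bar{\partial}_B^*\bar{\partial}_B$ on $\Omega^0$ lifts to a genuine nonzero eigenvalue of $D_{L\otimes K_M^{1/2}}$ under $D_A = \sqrt{2}(\bar{\partial}_B + \bar{\partial}_B^*)$, which amounts to producing a nontrivial $\Omega^{0,1}$-component in each eigenspinor; this is straightforward once $\deg(L)\leq 0$, since the Dolbeault Laplacian on $\Omega^{0,1}(L\otimes K_M^{1/2})$ has the same nonzero spectrum as its $\Omega^0$ counterpart.
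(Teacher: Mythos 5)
The paper does not actually prove this statement: it is quoted from Almorox and Tejero Prieto \cite[Theorem 5.3]{AP} and used only to test the sharpness of the Main Theorem, so there is no internal proof to compare against. Your representation-theoretic plan is nonetheless a correct route and is essentially the standard harmonic-analysis argument for constant-curvature surfaces. The reduction is right: $\mathcal{D}_L$ is the complex Dirac operator of $E=L\otimes K_M^{1/2}=L\otimes\mathcal{O}(-1)$, so $\deg E=\deg L-1<0$, hence $H^0(E)=0$, and Lemma \ref{dl} together with the observation that applying $\bar{\partial}_B$ to an eigensection supplies the missing $\Omega^{0,1}$-component shows that the nonzero spectrum of $\mathcal{D}_L^2$ is exactly twice the spectrum of $\bar{\partial}_B^*\bar{\partial}_B$ on $\Omega^0(E)$. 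The bookkeeping does close up: with $\vol(\C\PP^1)=8\pi/R$ the Hermitian--Einstein constant is $i\Lambda F_B=\frac{R}{4}\deg E$; Frobenius reciprocity gives $\Gamma_{L^2}(\mathcal{O}(d))\cong\bigoplus_{q\ge 0}V_{|d|+2q}$ with multiplicity one; and in the normalization where the scalar Laplacian has eigenvalues $\frac{R}{2}l(l+1)$, the rough Laplacian acts on $V_{|d|+2q}$ by $\frac{R}{2}\bigl[(\frac{|d|}{2}+q)(\frac{|d|}{2}+q+1)-\frac{d^2}{4}\bigr]$. Substituting into (\ref{prop_ide_gen}) with $d=\deg L-1$ yields eigenvalues $\frac{R}{4}(q+1)(q+1-\deg L)$ for $\bar{\partial}_B^*\bar{\partial}_B$, hence $\frac{R}{2}\bigl((q+1)^2-(q+1)\deg L\bigr)$ for $\mathcal{D}_L^2$, as required. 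Two points you should make explicit in a write-up: (i) a unitary connection with $F_L=-iB\nu$ is gauge-equivalent to the $SU(2)$-invariant one (the difference of two such connections is a closed, hence exact, $1$-form on $S^2$), which is what licenses the equivariance and Schur's lemma step; (ii) for $\deg L<0$ the operator $\mathcal{D}_L$ has nontrivial kernel (Serre duality gives $H^1(E)\neq 0$), so the displayed set is really the nonzero (positive) part of the spectrum --- a caveat about the quoted statement rather than about your argument.
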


Setting $q=0$ into formula (\ref{spec-ap}), it is easy to see that the estimate of the second Corollary for the nonzero eigenvalues of the twisted real Dirac operator is indeed attained in this example.

Furthermore, consider $L=K_{\C\PP^1}^{-1/2}\otimes E$, so that $\deg(L) = \deg(E) + 1$; let $\nabla_L$ be the tensor connection $\nabla_S \otimes \mb{I} + \mb{I} \otimes \nabla_A$. Then the twisted complex Dirac operator $D_A$ coincides with the twisted real Dirac operator ${\cal D}_L$, and, by formula (\ref{spec-ap}), the spectrum of $D_A$ is given by:
$$ \sqrt{\frac{R}{2}\left( (q+1)^2 - (q+1)(1+\deg(E)) \right)} ~,~ q\in\Z^+ ~.$$
Setting $q=0$, we conclude that the smallest eigenvalue of $D_A$ is \linebreak $\sqrt{-R\deg(E)/2}$, hence the smallest eigenvalue of the twisted Dolbeault Laplacian acting on sections of $E$ is precisely $-R\deg(E)/4$.

Now let us apply the Main Theorem and the Corollary to the case $M=\C\PP^1$ with a metric of constant scalar curvature $R$, so that, by the Gauss-Bonnet formula, $R=8\pi/\vol(M)$. Let $E\to M$ be a line bundle of negative degree, i.e. $\deg(E) \leq -1$. Therefore our estimate (\ref{lb}) for the eigenvalues of the twisted Dolbeault Laplacian can be written as
$$ \lambda \ge - \frac{2\pi \deg(E)}{\vol(M)} = - \frac{R \deg(E)}{4} ~, $$
while estimate (\ref{lb-d}) for the eigenvalues of the twisted Dirac operator can be written as
$$ \lambda \ge \sqrt{- \frac{4\pi \deg(E)}{\vol(M)}} = \sqrt{- \frac{R \deg(E)}{2}} ~, $$

Comparing with the results mentioned above, we conclude that the lower bounds (\ref{lb}) and (\ref{lb-d}) are actually attained in this example. 

Similar considerations and comparison with the results of \cite{AP} allow us to conclude that the lower bounds (\ref{lb}) and (\ref{lb-d}) are also attained when $M$ is a Riemann surface of arbitrary genus with a metric of constant curvature and $E\to M$ is a line bundle of the appropriate degree equipped with a connection with constant curvature (see \cite[Theorems 5.10 and 5.22]{AP}). Notice that the fact that the lower bound (\ref{lb}) is attained also allow us to conclude that the twistor equation (\ref{t-eq}) admits solutions in these cases.



\begin{thebibliography}{c}

\bibitem{Al} B. Alexandrov,  G. Grantcharov,  S. Ivanov, The Dolbeault operator on Hermitian spin surfaces, Annales de l'institut Fourier, 51 no. 1 (2001) 221-235.

\bibitem{AP} A. L. Almorox, C. Tejero Prieto, Holomorphic spectrum of twisted Dirac operators on compact Riemann surfaces,  J. Geom. Phys. 56 no. 10 (2006) 2069--2091. 

\bibitem{A} M. Atiyah, Eigenvalues of the Dirac operator, Workshop Bonn 1984 (Bonn, 1984),  251--260, Lecture Notes in Math., 1111, Springer, Berlin, 1985.

\bibitem{Ba} H. Baum, Eigenvalues Estimates for Dirac Operators Coupled to Instantons,  Ann. Global Anal. Geom. 12  (1994), 193--209.

\bibitem{Bo} J. P. Bourguignon, P. Li, S. T. Yau, Upper bound for the first eigenvalue of algebraic submanifolds, Comment. Math. Helv. 69 no. 2 (1994) 199--207.

\bibitem{Co} B. Colbois, A. El Soufi, Eigenvalues of the Laplacian acting on $p$-forms and metric conformal deformations,  Proc. Amer. Math. Soc. 134 no. 3 (2006) 715--721.

\bibitem{DK}
S. K. Donaldson, P. B. Kronheimer, {\it The geometry of four-manifolds}, Oxford University Press, New York, 1990.

\bibitem{F}
T. Friedrich, {\it Dirac Operator in Riemannian Geometry}, American Mathematical Society, Providence, RI, 2000.

\bibitem{Hi} N. Hitchin, Harmonic spinors,  Advances in Math.  14  (1974), 1--55.

\bibitem{Ki1} K. D. Kirchberg, An estimation for the first eigenvalue of the Dirac operator on closed K\"ahler manifolds of positive scalar curvature,  Ann. Global Anal. Geom. 4 no. 3 (1986) 291--325.

\bibitem{Ki2} K. D. Kirchberg, The first eigenvalue of the Dirac operator on K\"ahler manifolds,  J. Geom. Phys. 7 no. 4 (1990) 449--468 (1991).

\bibitem{Ko} S. Kobayashi, Differential geometry of complex vector bundles, Princeton University Press, Princeton, NJ; Iwanami Shoten, Tokyo, 1987.

\bibitem{La} H. B. Lawson, M. L. Michelsohn, {\it Spin geometry}, Princeton Mathematical Series 38, Princeton University Press.

\bibitem{Pr1} C. Tejero Prieto, Holomorphic spectral geometry of magnetic Schr\"odinger operators on Riemann surfaces,  Differential Geom. Appl. 24 no. 3 (2006) 288--310.

\end{thebibliography}
\end{document}